
\documentclass[12pt]{amsart}
\usepackage{amscd,amssymb,graphics,lineno}
\usepackage{mathrsfs} 

\oddsidemargin 0.1875 in
\evensidemargin 0.1875in
\textwidth=150 mm 
\textheight=210 mm

\newtheorem{theorem}{Theorem}[section]
 
\newtheorem{lemma}[theorem]{Lemma}

\theoremstyle{definition}

\theoremstyle{remark}

\numberwithin{equation}{section}

\newcommand{\abs}[1]{\lvert#1\rvert}

\def\Q {{\mathbb Q}}
\def\s{{\mathbb S}}

\def\N{{\mathbb N}}

\def\R{{\mathbb R}}

\def\e{\varepsilon}

\def\Aut{{\mbox{\rm Aut}\,}}

\def\UCB{{\mbox{\rm UCB}\,}}

\def\CB{{\mbox{\rm CB}\,}}
\def\St{{\mbox{\rm St}}}

\newcounter{quest}
\stepcounter{quest}


\begin{document}

        \title[Equivariant compactifications]
        {A topological transformation group without non-trivial equivariant compactifications
}

\author[V.G. Pestov]{Vladimir G. Pestov}

\address{Department of Mathematics and Statistics, 
University of Ottawa, 585 King Edward Ave., Ottawa, Ontario, Canada K1N 6N5}

\address{Departamento de Matem\'atica, Universidade Federal de Santa Catarina, Trindade, Florian\'opolis, SC, 88.040-900, Brazil}
\email{vpest283@uottawa.ca}

\thanks{{\it 2000 Mathematics Subject Classification:} 37B05; 54H15
}



\begin{abstract} 
        There is a countable metrizable group acting continuously on the space of rationals in such a way that the only equivariant compactification of the space is a singleton. This is obtained by a recursive application of a construction due to Megrelishvili, which is a metric fan equipped with a certain group of homeomorphisms.
        The question of existence of a topological transformation group with the property in the title was asked by Yu.M. Smirnov in the 1980s. 
\end{abstract}

\maketitle

\section{Introduction}
Let a (Hausdorff) topological group $G$ act continuously on a (Tychonoff) topological space $X$. An {\em equivariant compactification} of $X$ is a compact space $K$, equipped with a continuous action by $G$, together with a continuous equivariant map $i\colon X\to K$ having a dense image in $K$. (See \cite{dV-2,dV-1,dV,stoyanov,megrelishvili,megrelishvili07,vM}.) The question of interest is whether, given $G$ and $X$, there exists a compactification into which $X$ embeds topologically, that is, $i$ is a homeomorphism onto its image.

For instance, this is so if a topological group $G$ acts on itself by left translations, in which case the compactification is the greatest ambit of $G$ (\cite{brook}; \cite{P06}, p. 42). If the acting group $G$ is discrete, then its action on $\beta X$, the Stone-\v Cech compactification of $X$, is clearly continuous, so $i$ is the canonical homeomorphic embedding  $X\hookrightarrow\beta X$. For non-discrete $G$, the action on the Stone-\v Cech compactification $\beta X$ is continuous just in some exceptional cases. Nevertheless, if the acting group $G$ is locally compact, then every $G$-space $X$ admits an equivariant compactification into which $X$ embeds homeomorphically, this is an important result by de Vries \cite{dV-1,dV}. (For compact Lie groups, the result was known to Palais much earlier, \cite{palais}.)

For a long time it was unknown whether the same conclusion holds in the case of every acting topological group (the question was advertised by de Vries in 1975, see \cite{dV-2}). However, Megrelishvili \cite{megrelishvili,megrelishvili89} has shown it is not so, by constructing an example where both $G$ and $X$ are Polish, yet the embedding $i$ is never topological for any equivariant compactification of $X$.
More examples can be found in the work of Megrelishvili and Scarr \cite{MegS} and Sokolovskaya \cite{sokolovskaya}. (The earliest claim of an example \cite{AS} was withdrawn by its authors.) In some examples of Megrelishvili, the mapping $i$ is not even injecive.

Some time in the 1980s, Yuri M. Smirnov asked about the existence of a topological group $G$ acting on a (non-trivial) space $X$ in such a way that the only equivariant compactification of $X$ is the singleton. The question was apparently never mentioned in Smirnov's papers; however, it was well known among the Moscow general and geometric topologists, and later documented in Megrelishvili's papers \cite{megrelishvili89,megrelishvili07}. (See also a discussion in \cite{P06}, Rem. 3.1.6.)
Here we notice that a topological transformation group conjectured by Smirnov indeed exists. 

The construction is based on the example of Megrelishvili \cite{megrelishvili,megrelishvili89}, the first ever in which the mapping $i\colon X\to K$ is not a topological embedding. In this construction, the space $X$ is the metric fan, joining the base point $\ast$ with countably many endpoints $x_n$ with the help of intervals of unit length, and equipped with the graph metric. The group $G$ is a certain subgroup of the group of homeomorphisms of $X$ with the compact-open topology, chosen in a certain subtle way.  Megrelishvili has shown that in every equivariant compactification of $X$, the images of the points $x_n$ converge to the image of the base point $\ast$, so the compactification mapping $i$ is not homeomorphic. A further observation by Megrelishvili is that if one joins two copies of such a fan by identifying their respective endpoints and extends the group action over the resulting space, then the image of the sequence of $(x_n)$ will converge to the image of each the two base points, thus these two points have a common image, and consequently $i$ is not even injective. 

The idea of our example is, starting with a topological transformation group, to attach to it a copy of the double Megrelishvili fan (in its rational version) at every pair of distinct points $x,y$. As a result, in any equivariant compactification of the resulting space, $X$ collapses to a point. If we repeat the same construction countably many times, the union of the increasing chain of resulting spaces has no nontrivial compactifications. 
The topological space that we obtain is countably infinite, metrizable, and has no isolated points, and so is homeomorphic to the space of rational numbers.


The article is concluded with a discussion of some open questions.

\section{Universal equivariant compactifications}

In this and the next section we will discuss known results, with a view to make the presentation self-sufficient.

All our actions will be on the left. 
If a topological group $G$ acts continuously on a compact space $K$, then the associated representation of $G$ by linear isometries on the space $C(K)$, 
\[{^gf}(x) = f(g^{-1}x),~~g\in G,~x\in K,~f\in C(K),\]
is continuous. Now let $G$ also act continuously on a Tychonoff space $X$, and let $i\colon X\to K$ be an equivariant continuous map with a dense image; that is to say, the $G$-space $K$ is an {\em equivariant compactification} of the $G$-space $X$. For every $f\in C(K)$, the composition $f\circ i$ is a bounded continuous function on $X$, and as a consequence of the continuity of the action of $G$ on $C(K)$, the orbit map 
\[G\ni g\mapsto {^gf\circ i}={^g(f\circ g)}\in \CB(X)\]
is continuous as a mapping from $G$ to the Banach space (moreover, a $C^\ast$-algebra) $\CB(X)$ of all continuous bounded complex functions on $X$ with the supremum norm. 

This allows one to retrieve the {\em maximal} (or: {\em universal}) equivariant compactification of $X$, as follows. Denote $\UCB_G(X)$ the family of all functions $f\in \CB(X)$ for which the orbit map $g\mapsto {^gf}$ is continuous. In other words, one has:
\[\forall \e>0,~~\exists V\ni e,~\forall x\in X~\forall v\in V~~\abs{f(x)-f(vx)}<\e.\]
(Such functions are sometimes called either {\em $\pi$-uniform} or {\em $\alpha$-uniform} on $X$, where $\pi$ or $\alpha$ is the symbol for the action of $G$ on $X$. Another possibility is to simply call those functions {\em right uniformly continuous} on $X$.) This $\UCB_G(X)$ is a $C^\ast$-subalgebra of $\CB(X)$, and so $G$ acts continuously on the corresponding maximal ideal space,  $\alpha_G(X)$. Equipped with the canonical continuous map $X\mapsto \alpha_G(X)$ having a dense image (evaluations at points $x\in X$), this is the universal equivariant compactification of $X$: indeed, for any other compactification $K$, the functions $f\circ i$, $f\in C(X)$, form a $C^\ast$-subalgebra of $\UCB_G(X)$, meaning that $X$ is an equivariant factor-space of $\alpha_G(X)$.

Thus, in order to prove that every equivariant compactification of a $G$-space $X$ is trivial, it is enough to show that $\alpha_G(X)$ is a singleton, or, the same, $\UCB_G(X)$ consists of constant functions only.

All of the above facts are quite easy observations, and the construction appears not only in the context of continuous actions, but also Borel and measurable ones. (Cf. e.g. \cite{GTW,Gl-W3}; we will allude to this parallel again in the Conclusion.)

For a more detailed treatment with numerous related results, references, and open problems, see Megrelishvili's survey paper \cite{megrelishvili07}.

\section{Equivariant metric fan}

Here we revise Megrelishvili's important construction from \cite{megrelishvili,megrelishvili89}.

\subsection{}
Denote $M$ the usual metric fan, that is, the union of countably many copies of the unit interval, $[0,1]\times\N$, with the base $\{0\}\times\N$ glued to a single point, $\ast$, and the topology given by the graph metric. In particular, $M$ is homeomorphic to the union of straight line intervals $[0,e_n]$ in the Hilbert space $\ell^2$ joining zero with the basic vectors $e_n$, $n\in\N_+$, with the norm topology. This is the cone over the discrete space $\N$ of the positive integers, except that the topology is weaker than that in the cone construction as used in algebraic topology.

The main invention of Megrelishvili is the clever choice of an acting group, $G$. It consists of all homeomorphisms of the fan keeping fixed each point located at a distance $1/k$, $k=1,2,3,\ldots$ from the base point $\ast$. (We will refer to such points in the future as {\em marked points.}) Thus, each element of $G$, when restricted to an interval of the form $[1/(k+1),1/k]\times\{n\}$, $k,n\in\N_+$, is an orientation-preserving self-homeomorhism of the interval. This means that $G$, as a group, is the product of a countably infinite family of copies of the group of homeomorphisms of the interval. Equip the group of homeomorphisms of every interval with the usual compact-open topology (that is, the topology of uniform convergence on the interval), and $G$, with the corresponding product topology. Now one can easily verify that this topology is the compact-open topology, or the topology of compact convergence on the fan.

The action of $G$ on $M$ is continuous. 
At every point $x\in M\setminus\{\ast\}$ (including the marked points) this is so because $x$ has a $G$-invariant closed neighbourhood homeomorphic to the unit interval, and locally the action of $G$ is just the action of some group of homeomorphisms with the compact-open topology which is continuous. 
At the base point $\ast$, the action is continuous namely due to all those fixed points converging to it in all directions. If $\e>0$, one can select $k$ with $1/k<\e$, then the union $[0,1/k]\times\N$ is a $G$-invariant neighbourhood of $\ast$, contained in the $\e$-ball around $\ast$.

Now let $f$ be a real-valued $\pi$-uniform function on $M$. One can assume that $f$ takes values in the interval $[0,1]$. Fix an $\e>0$. Since $f$ is continuous at the base point, there is $k$ with $\abs{f(x)-f(\ast)}<\e/2$ whenever $d(\ast,x)\leq 1/k$, that is, if $x$ belongs to one of the intervals $[0,1/k]\times\{n\}$. 

For each one of the fixed points $(1/k,n),(1/(k-1),n),\ldots,(1,n)$, $n\in\N$, because of the continuity of $f$, there is $\e_{i,n}>0$ such that the difference of value of $f$ at the points $\left(\frac 1i\pm \e_{i,n},n\right)$ is less than $\e/2k$. We can assume $\e_{i,n}$ to be so small that the open interval of radius $\e_{i,n}$ around $(1/k,n)$ contains no other marked points.
Since $f$ is $\pi$-uniform, for some neighbourhood $V$ of the identity in $G$ one has $\abs{f(x)-f(vx)}<\e/2k$ for each $x\in M$ and $v\in V$. In view of the definition of the topology of $G$, for all $n$ sufficiently large, $n\geq N$, the restriction of $V$ to every interval between two consequent fixed points along $[0,1]\times \{n\}$ contains all homeomorphisms of this interval. In particular, suitable elements of $V$ take 
$\left(\frac 1k+\e_{k,n},n\right)$ to $\left(\frac 1{k-1}-\e_{k,n},n\right)$, $\left(\frac 1{k-1}+\e_{k,n},n\right)$ to $\left(\frac 1{k-2}-\e_{k,n},n\right)$, and so on. The triangle inequality implies that $\abs{f(1,n)-f(\ast)}<\e$ for all $n\geq N$. 

By the definition of the topology of the universal compactification $\alpha_G(M)$, the images $i(1,n)$, $n\to\infty$ converge to $i(\ast)$. The continuous equivariant mapping $i$ is no not a homeomorphic embedding. 

\subsection{} The above construction was further modified by Megrelishvili as follows. Let $M(x)$ and $M(y)$ be two copies of the metric fan, having base points $x$ and $y$ respectively, where $x\neq y$. The same group $G$ acts continuously on both spaces. Now glue $M(x)$ and $M(y)$ together along the respective endpoints $(1,n)$, $n\in\N$, with the quotient topology (the same: the topology given by the graph metric). Thus, we obtain a ``metric suspension'' over $\N$. Since the endpoints are all fixed, the action of $G$ on the resulting space is well-defined and continuous. In the universal $G$-compactification of the ``double fan'' $M(x,y)$, one has $i(1,n)\to i(x)$ and $i(1,n)\to i(y)$, and so $i(x)=i(y)$. The mapping $i$ is not even injective. 

\subsection{} In this paper we find it more convenient to replace the acting group, $G$, with a product of two copies of the same topological group, $G_x\times G_y$, where $G_x$ acts on $M(x)$ and $G_y$ acts on $M(y)$. 
The action by each of the respective groups, $G_x$ and $G_y$, extend over the disjoint union $M(x)\sqcup M(y)$  by acting on the ``complementary part'' in a trivial way, that is, leaving each point fixed. This way, we obtain a continuous action of the product group $G_x\times G_y$. After we glue $M(x)$ and $M(y)$ together along the endpoints $(1,n)$, $n\in\N$, the action of the product group $G_x\times G_y$ on $M(x,y)$ is still well-defined and it is easy to see that it is continuous. Again, in the universal $G_x\times G_y$ equivariant compactification of $M(x,y)$, the images of the points $x$ and $y$ are identical.

\subsection{} Megrelishvili's double metric fan, or metric suspension, will be the key element in the construction below. However, we need to introduce slight modifications. 

First of all, notice that the topology on the group $G$ can be strengthened to the topology of pointwise convergence on $M$ viewed as discrete. In other words, the basic system of neighbourhoods of identity is formed by finite intersections of stabilizers $\St_{x_1}\cap\ldots\cap \St_{x_n}$, for all finite collections $x_1,x_2,\ldots,x_n\in M$. This is a group topology.
On the group of homeomorphisms of the interval, this topology is easily seen to be finer than the compact-open topology: indeed, given $\e>0$, find $k>\e^{-1}$, and if a homeomorphism stabilizers every point $1/k, 2/k,\ldots, (k-1)/k$, then it moves any point of the interval by less than $\e$. And on the group $G$, the pointwise topology is that of the product of groups of homeomorphisms of the individual intervals between marked points, if each one of them is equipped with the pointwise topology. Consequently, our topology is strictly finer than the topology used by Megrelishvili, and so the action is still continuous. This fact is easily seen directly as well: for points $x$ other than the base point, one can have $x$ ``boxed in'' on both sides by means of two points $y,z$ stabilized by an open neighbourhood, $V\ni e$, and as a result, every element of $V$ will take the interval $(y,z)$ to itself. At the base point, the continuity of the action is, as before, a consequence of the existence of the uniformly convergent sequences of fixed points in all directions. 

When this topology is used, the interval $[0,1]$ can be replaced just with a linearly ordered set, $(X,\leq)$. The only restrictions are: $X$ contains the smallest element to which there converges a strictly decreasing countable sequence, and the order-preserving bijections stabilizing elements of this sequence (of marked points) act transitively on each interval between two consecutive marked points. 
In this paper, we will make the most economical selection, namely, the interval $[0,1]_{\Q}$ of the rational numbers. Also, technically it would be more convenient for us to fix as the convergent sequence $(2^{-k})_{k=0}^{\infty}$ rather than $(1/k)_{k=1}^{\infty}$.

\subsection{}
To sum up, we will denote $M(x,y)$ the {\em rational Megrelishvili double fan}. As a set, it is obtained from $[0,1]_\Q\times \N\times \{x,y\}\times \{(x,y)\}$ by identifying 
\begin{itemize}
\item
        all the points $(0,n,x,(x,y))$, $n\in\N_+$, with the point $x$, 
\item all the points $(0,n,y,(x,y))$, $n\in\N_+$,  with the point $y$, and 
\item every point $(1,n,x,(x,y))$ with the point $(1,n,y,(x,y))$, $n\in\N_+$. 
        \end{itemize}
        
 Notice that the order of $x$ and $y$ matters, which is why we have included the indexing pair $(x,y)$ in the definition. However, if it is clear what pair we are talking about, we will still simply talk of the points $(t,n,x)$ and $(t,n,y)$, suppressing the index $(x,y)$.
        
The topology on the space $M(x,y)$ is given by the graph distance, that is, the length of the shortest path joining two points. A path between two points $a,b$ is a sequence of points $x_0=a, x_1,\ldots,x_n=b$, where any two adjacent points belong to the same edge. Thus, $M(x,y)$ is a countable metric space.
        
A topological realization of $M(x,y)$ can be obtained inside the space $\ell^2(\N)$ by identifying $x$ with $e_0$, $y$ with $-e_0$, and by joining $e_0$ and $-e_0$ with each vector $e_n$, $n\in\N_+$, by means of a straight line segment in the space $\ell^2$ viewed as a vector space {\em over the rationals,} $\Q$. Then induce the norm topology from $\ell^2$.

The group $G=G_{(x,y)}$ of transformations of $M(x,y)$ consists of all homeomorphisms which keep each point of the form
\[\left(2^{-k},n,x\right), ~\left(2^{-k},n,y\right),~k\in\N,~n\in\N_+,\]
fixed. The restriction of the action on each interval with the endpoints $(2^{-k-1},n,a)$ and $(2^{-k},n,a)$, $a\in \{x,y\}$, consists of all order-preserving bijections of the interval.
The topology on $G$ is that of a simple convergence on $M(x,y)$ viewed as discrete, that is, induced by the embedding of $G$ into the space $M(x,y)$ with the discrete topology raised to the power $M(x,y)$.

Since the space is countable, the group $G$ is separable metrizable, and in fact Polish. Indeed, one can easily see that $G$ is the product of countably many copies of the well-known Polish group $\Aut(\Q,\leq)$ of all order-preserving bijections of the rationals, with its standard topology of simple convergence on $\Q$ as discrete.
The group $G$ acts continuously on $M(x,y)$, and for every equivariant compactification of this space, one has $i(x)=i(y)$, with the same exact argument as in the original Megrelishvili's example.

\subsection{}
The rational version of Megrelishvili's metric fan is not new. It was first described by J. van Mill (\cite{vM}, Remark 3.4), see also \cite{kozlov}, example 4 in \S 5.

\section{Attaching a double fan to every pair of points of a $G$-space}

Given a topological $G$-space $X$, the space that we will denote $M(X)$ is obtained from $X$ by attaching to every ordered pair of distinct points $x,y\in X$ a double rational fan $M(x,y)$, and subsequently enlarging the transformation group so as to bring in all the copies of $G_{(x,y)}$ acting on each $M(x,y)$.
Notice that we attach two different fans to each pair $\{x,y\}$, one corresponding to the ordered pair $(x,y)$, and the other, to the pair $(y,x)$, this will be necessary later on for the symmetry reasons.
Under some restrictions on $(X,G)$, the resulting action is continuous.
As a consequence, within every equivariant compactification of the resulting space $M(X)$, the original space $X$ will collapse to a point. Here is a more precise description.

\subsection{}
One of the oldest results of uniform topology (see e.g. a preliminary draft \cite{bourbaki} of the corresponding chapter of Bourbaki's {\em Topologie G\'en\'erale}) states that a topological space is Tychonoff if and only if the topology is generated by a separated uniform structure, or, equivalently, by continuous pseudometrics separating points.

Let $X$ be a Tychonoff topological space. As usual, $X^2\setminus\Delta$ denotes the collection of all ordered pairs of different points of $X$.  

Consider the disjoint sum $\sqcup_{(x,y)\in X^2\setminus\Delta} M(x,y) $ of copies of the double rational fan, and glue this sum to $X$ along the canonical map sending each fan endpoint to the corresponding point of $X$. Denote the resulting set $M(X)$.

The first task is to topologize $M(X)$. Let $d$ is a pseudometric on $X$. Denote $\bar d$ the corresponding path pseudometric on $M(X)$, that is, $\bar d(a,b)$ is the smallest among the numbers $\sum_{i=0}^n d(x_i,x_{i+1})$, where $x_0=a$, $x_n=b$, and the distance between every two consecutive points $x_i,x_{i+1}$ is calculated either to $X$, or in some $M(x,y)$. 

In general, the path length can be arbitrarily long, but
if $d\leq 2$ (which is not a restrictive condition from the view of topology that $d$ generates), then $n\leq 3$ suffices. For instance, in this case
\[\bar d(x,y) = d(x,y)\mbox{ for }x,y\in X,\] 
\[\bar d((t,n,x,(x,y)),z) = \min\{t + d(x,z),2-t+d(y,z)\}\mbox{ if }x,y,z\in X,\]
and so forth. Also, one has $\bar d\leq 4$.

We will topologize $M(X)$ with the set of all pseudometrics of the form $\bar d$, as $d$ runs over all continuous pseudometrics on $X$ with $d\leq 2$. The following should be now obvious.

\begin{lemma}
\label{l:4.1}
        The space $M(X)$ is a Tychonoff space containing $X$ and each fan $M(x,y)$ as closed  subspaces. Every set $M(x,y)\setminus\{x,y\}$ is open in $M(X)$. 
        \qed
\end{lemma}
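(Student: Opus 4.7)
The plan is to establish the four assertions by estimating the path pseudometric $\bar d$. The guiding observation is that any path from a fan point $(t,n,a,(x,y))$ to the complement of $M(x,y)$ must first reach $x$ or $y$, contributing an ``exit cost'' of at least $\min(t,2-t)$. Combined with the normalization $d\leq 2$ --- which forbids shortcuts through an attached fan of diameter $2$ --- this gives $\bar d|_{X\times X}=d$, and for a suitable choice of $d$, the restriction of $\bar d$ to $M(x,y)$ agrees with its graph metric.

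I would first prove openness of $M(x,y)\setminus\{x,y\}$. For any point $p=(t,n,a,(x,y))$ with $0<t\leq 1$ and any continuous pseudometric $d\leq 2$ on $X$ --- even $d\equiv 0$ suffices --- direct inspection of $\bar d$ shows that each $q$ with $\bar d(p,q)<\e$ lies in $M(x,y)\setminus\{x,y\}$ whenever $\e<\min(t,2-t)$. Closedness of $X$ then follows at once, since $M(X)\setminus X$ is the disjoint union of these open sets. Closedness of $M(x,y)$ reduces to the case $p\in X\setminus\{x,y\}$, which is handled by the Tychonoff property of $X$: take a continuous $f\colon X\to[0,1]$ with $f(p)=0$ and $f(x)=f(y)=1$, set $d(a,b):=\abs{f(a)-f(b)}$, and check that $\bar d(p,q)\geq 1$ for every $q\in M(x,y)$, since any path from $p$ into the fan must first cross to $x$ or $y$.

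For the Tychonoff property of $M(X)$, since the topology is the initial topology for the family $\{\bar d\}$, it suffices to separate distinct points by some $\bar d$. A short case analysis by location of the two given points --- both in $X$, both in a common fan, in different fans, or one in $X$ and one in a fan --- produces in each case a suitable $\bar d$, using $d\equiv 0$ to separate fan-internal points and Tychonoff-ness of $X$ for the remaining cases. To confirm that $X$ and $M(x,y)$ sit as subspaces with the correct topology, the identity $\bar d|_{X\times X}=d$ gives agreement on $X$, and on $M(x,y)$ choosing $d$ with $d(x,y)=2$ (available by Tychonoff-ness) makes $\bar d$ restricted to $M(x,y)\times M(x,y)$ coincide with the graph metric.

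The main obstacle I anticipate is purely bookkeeping: the formula for $\bar d$ admits many competing paths (direct in $X$, via one fan endpoint, via two fan endpoints, via a detour through a third fan), and in each configuration one must verify that the claimed minimum is actually realized. Once the exit-cost principle and the role of the normalization $d\leq 2$ are isolated, every sub-case collapses to a one-line check.
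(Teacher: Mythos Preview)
Your proposal is correct. The paper gives no proof of this lemma---it is marked with \qed\ immediately after the statement, the author treating all four assertions as evident from the construction---so your sketch simply fills in the details the author chose to omit; the exit-cost principle and the role of the normalization $d\leq 2$ are precisely the observations encoded in the explicit formulas for $\bar d$ displayed just before the lemma.
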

 
The resulting topology does not depend on the choice of a family of pseudometrics $d$ once it generates the topology of $X$. 

\begin{lemma}
        Let $D$ be some family of pseudometrics generating the topology of $X$ and satisfying $d\leq 2$ for all $d\in D$. The pseudometrics $\bar d$, $d\in D$ generate the same topology on $M(X)$ as above.
        \label{l:D}
\end{lemma}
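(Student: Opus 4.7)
\emph{Proof plan.} Since $D$ is contained in the collection of all continuous pseudometrics on $X$ bounded by $2$, the topology generated by $\{\bar d : d\in D\}$ is automatically no finer than the one from Lemma~\ref{l:4.1}. For the reverse inclusion it suffices to check that, for every continuous pseudometric $\rho$ on $X$ with $\rho\leq 2$, every $p\in M(X)$, and every $\e>0$, the ball $B_{\bar\rho}(p,\e)$ contains a basic neighbourhood of $p$ with respect to $\{\bar d : d\in D\}$. I would split the verification according to the location of $p$.

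\emph{Case A: $p\in M(x,y)\setminus\{x,y\}$ for some pair $(x,y)$.} Any path in $M(X)$ starting at $p$ and leaving the fan $M(x,y)$ must pass through one of its two base points, so its length is at least the positive intrinsic graph distance $r$ from $p$ to $\{x,y\}$ inside $M(x,y)$. Hence for any $d$ and any $\delta<r$, the ball $B_{\bar d}(p,\delta)$ is just the ball of radius $\delta$ in the graph metric of $M(x,y)$, and does not depend on $d$. The same description applies to $B_{\bar\rho}(p,\delta)$, so taking $\delta<\min(r,\e)$ gives $B_{\bar d}(p,\delta)\subseteq B_{\bar\rho}(p,\e)$ for any single $d\in D$.

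\emph{Case B: $p\in X$.} Because $D$ generates the topology of $X$ and $\rho$ is continuous at $p$, one can pick a finite $D_0\subseteq D$ and $\delta>0$ with $\delta<\min(\e/2,1/2)$ such that
\[\bigl\{z\in X : d(p,z)<\delta\text{ for all }d\in D_0\bigr\}\subseteq\bigl\{z\in X : \rho(p,z)<\e/2\bigr\}.\]
The claim is then $\bigcap_{d\in D_0}B_{\bar d}(p,\delta)\subseteq B_{\bar\rho}(p,\e)$, which I would check by a brief case analysis on a point $q$ of the intersection: either $q\in X$; or $q$ lies in a fan having $p$ as one of its two base points; or $q$ lies in a fan $M(z_1,z_2)$ with $z_1,z_2\neq p$. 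In every subcase an optimal path from $p$ to $q$ has the shape ``short $X$-step followed by a short in-fan step'', and the hypotheses on $D_0,\delta$ convert the assumption $\bar d(p,q)<\delta$ for every $d\in D_0$ into the desired estimate $\bar\rho(p,q)<\e$ directly.

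The technical point that needs care is the last subcase: when $q$ sits in a double fan $M(z_1,z_2)$, the minimum defining $\bar d(p,q)$ could in principle be achieved on different sides of the fan for different $d\in D_0$. This possibility is ruled out by the bound $\delta<1$: each half-ray of a double fan has length $1$, so any path from $p$ entering the fan through the ``wrong'' base contributes at least $1$ to the total length and is thus unavailable in a ball of radius $<1$. Once that is observed, the routing from $p$ to $q$ is fixed by the location of $q$, uniformly in $d\in D_0$, and the estimate on $\bar\rho(p,q)$ is straightforward.
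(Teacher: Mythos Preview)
Your argument is correct and follows the same route as the paper's: handle interior fan points (where small balls do not depend on the chosen pseudometric) separately from points of $X$, and for the latter transfer a $\rho$-neighbourhood in $X$ to a $D$-neighbourhood and then lift it to $M(X)$. You are in fact slightly more careful than the paper on two counts --- you work with a finite $D_0\subseteq D$ rather than a single $d\in D$ (the paper's phrase ``there is $d\in D$'' tacitly treats $D$ as directed), and you spell out why $\delta<1$ pins down the base point through which any short path to $q$ must run, a step the paper leaves implicit.
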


\begin{proof}
        We need to verify that the same topology as defined by us previously is generated at every point of $M(x,y)$. For the interior points of each fan, this is clear. Let therefore $x\in X$ and let $V$ be a neighbourhood of $x$ in $M(X)$. There is a continuous pseudometric $\rho$ on $X$ and $\e$, $0<\e<1$, so that the corresponding open ball around $x$, formed in $M(X)$, is contained in $V$,
        \[V\supseteq B(x,\e,{\bar \rho}).\]
        By our assumption on $D$, there is $d\in D$ and a $\delta>0$ with the property that $B(x,\delta,d)\subseteq B(x,\e/2,\rho)$ (the balls in $X$). Also, without loss in generality, one can assume that $\delta<\e/2$. If now $z\in B(x,\delta,\bar d)$ (the ball formed in $M(X)$), then for some unique $y,w\in X$ one has either $z=(t,n,y,(y,w))$ or $z=(t,n,y,(w,y))$ and
        \[\bar d(x,z) \leq t + d(x,y) < \delta<\frac{\e}2,\]
        and
        \begin{eqnarray*}
                \bar\rho(x,z) &\leq & t + \rho(x,y) \\
                &<& \frac{\e}2 + \frac{\e}2 = \e.
                \end{eqnarray*}
\end{proof}

\subsection{}
Next we will extend the action of the group $G$ from $X$ over $M(X)$. 

Let first $g$ be an arbitrary self-homeomorphism of $X$. Then $g$ extends to a homeomorphism of $X^2\setminus\Delta$, as $(x,y)\mapsto (gx,gy)$. Given any $(x,y)$ and a point $(t,n,a)\in M(x,y)$, $a\in\{x,y\}$, define 
\[g(t,n,a,(x,y)) = (t,n,ga,(gx,gy))\in M(gx,gy).\]
It is clear that this defines a homeomorphism of $M(x,y)$ onto $M(gx,gy)$. 
In particular, if $g$ swaps $x$ and $y$, then $M(x,y)$ is being homeomorphically mapped onto $M(y,x)$. In particular, the extension of $g$ over $M(X)$ is bijective.

\begin{lemma}
        Every extension of a homeomorphism $g$ of $X$ as above is a homeomorphism of $M(X)$. Also, the extended homeomorphism preserves the family of the marked points of all the double fans, $(2^{-k},n,a)$.
        \label{l:extensionofg}
\end{lemma}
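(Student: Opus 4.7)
The plan is to reduce continuity of the extension to the behaviour of the generating pseudometrics. Bijectivity of the extension was already recorded in the paragraph preceding the statement, and preservation of marked points is immediate from the defining formula
\[g(t, n, a, (x, y)) = (t, n, ga, (gx, gy)),\]
since the first two coordinates are untouched. Hence all the work lies in checking that $g$ and $g^{-1}$ are continuous as self-maps of $M(X)$.

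The main idea is the following. Given any continuous pseudometric $d$ on $X$ with $d \leq 2$, the pullback $d' := d \circ (g \times g)$ is again a continuous pseudometric on $X$ bounded by $2$. Since $g$ is a homeomorphism of $X$, the family of all such $d'$ still generates the topology of $X$, so by Lemma~\ref{l:D} both families $\{\bar d\}$ and $\{\bar{d'}\}$ of corresponding path pseudometrics generate the topology of $M(X)$. The key auxiliary step I would prove is the isometric identity
\[\bar d(g(a), g(b)) = \bar{d'}(a, b) \quad \text{for all } a, b \in M(X),\]
which would immediately yield uniform continuity of $g$ as a map from $M(X)$ with pseudometric $\bar{d'}$ to $M(X)$ with pseudometric $\bar d$, for each such $d$. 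As both families generate the same topology on $M(X)$, this gives continuity of $g$; applying the same argument to the homeomorphism $g^{-1}$ of $X$, whose extension is readily checked to be the set-theoretic inverse of the extension of $g$, yields continuity of the inverse.

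The identity reduces to a short case analysis of shortest paths. Since $g$ sends each fan $M(x, y)$ bijectively onto $M(gx, gy)$ preserving the parameters $(t, n)$, any admissible path $a = x_0, x_1, \ldots, x_k = b$ (with consecutive points lying either in $X$ or in a common fan) is transported by $g$ to an admissible path from $g(a)$ to $g(b)$ whose successive local distances transform as $d(p, q) \mapsto d(gp, gq) = d'(p, q)$; infimising over all such paths yields the identity. The bound of at most three segments when $d \leq 2$, recorded just before Lemma~\ref{l:4.1}, keeps the case analysis finite. I expect the only real subtlety to be keeping track of which of the two base points $gx, gy$ of the image fan corresponds to a given endpoint of $M(x, y)$ (in particular when $g$ swaps $x$ and $y$); this is dictated directly by the defining formula, so the obstacle is more bookkeeping than conceptual.
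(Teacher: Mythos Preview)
Your proposal is correct and follows essentially the same route as the paper: the paper also pulls back the pseudometric via $g$, defining $\rho(a,b)=d(ga,gb)$ (your $d'$), and observes that $g$ carries the $\bar\rho$-ball $B(x,\e,\bar\rho)$ onto $B(gx,\e,\bar d)$, while at interior fan points $g$ is a local isometry; the only cosmetic difference is that you package this as a single global identity $\bar d(g(a),g(b))=\bar{d'}(a,b)$ and invoke Lemma~\ref{l:D} explicitly, whereas the paper splits into the two cases and leaves the appeal to Lemma~\ref{l:D} implicit.
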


\begin{proof}
        Again, it is quite obvious that $g$ is a local homeomorphism (in fact, a local isometry) at the interior point of every double fan. It is also clear that the extension of $g^{-1}$ is the inverse to the extension of $g$.
        It remains to verify that $g$ is continuous at an arbitrary point $x\in X$. Given a neighbourhood $V$ of $gx$, find a continuous pseudometric $d$ on $X$ and an $\e>0$ with $B(gx,\e,\bar d)\subseteq V$. Define a continuous pseudometric $\rho$ on $X$ by $\rho(a,b) = d(ga,gb)$. Now it is easy to see that the image of the ball $B(x,\e,\bar\rho)$ under $g$ is contained in (in fact, coincides with) $B(gx,\e,\bar d)$. 
        The second statement is clear.
\end{proof}

In this way, the group $G$ acts on $M(X)$ by homeomorphisms.

\begin{lemma}
        Assume the topology of the group $G$ is that of simple convergence on $X$ as discrete. Then it is also the topology of simple convergence on $M(X)$ as discrete. If in addition $G$ acts on $X$ continuously, then $G$ acts on $M(X)$ continuously as well.
\end{lemma}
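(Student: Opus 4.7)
The statement has two parts, and both rest on the same observation: for any fan point $p = (t, n, a, (x,y)) \in M(x,y)$, the $G$-stabiliser satisfies $\St_x \cap \St_y \subseteq \St_p$, because any $g \in \St_x \cap \St_y$ forces $ga = a$ (as $a \in \{x,y\}$), whence $gp = (t,n,ga,(gx,gy)) = (t,n,a,(x,y)) = p$ by the defining formula of Lemma~\ref{l:extensionofg}.

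For the first claim, since $X \subseteq M(X)$, the topology of simple convergence on $M(X)$ discrete, regarded as a potential topology on $G$, is a priori finer than the given topology of $G$ (simple convergence on $X$ discrete). The reverse inclusion follows from the observation above: any basic neighbourhood $\bigcap_i \St_{p_i}$ of the identity for the finer topology contains the intersection of the finitely many stabilisers $\St_{x_i} \cap \St_{y_i}$ attached to the pairs $(x_i,y_i)$ indexing the fans containing the fan-interior $p_i$'s, together with $\St_{p_i}$ for those $p_i$ already in $X$, and this intersection is itself basic for the given topology of $G$.

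For continuity of $G \times M(X) \to M(X)$, homogeneity via the action-by-homeomorphisms (Lemma~\ref{l:extensionofg}) reduces the task to continuity at every pair $(e, p)$. Fix a continuous pseudometric $d$ on $X$ with $d \leq 2$ and $\e \in (0, 1)$; I plan to exhibit a neighbourhood $U$ of the identity and a $\delta > 0$ such that $U \cdot B(p, \delta, \bar d) \subseteq B(p, \e, \bar d)$, and split on whether $p$ is a fan interior point or lies in $X$.

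If $p = (t_0, n_0, a, (x, y))$ is a fan interior point, I take $U = \St_x \cap \St_y$ and $\delta < \min(t_0, 1-t_0, \e)$ (or $\delta < \min(1, \e)$ if $t_0 = 1$, accommodating the two edges meeting at the top). Distance estimates inside the graph $M(X)$ force every $q \in B(p, \delta, \bar d)$ onto the one or two edges of $M(x,y)$ incident to $p$, hence inside $M(x,y)$, on which $U$ acts trivially by the key observation applied to every point; so $hq = q$ for $h \in U$, giving $\bar d(p, hq) = \bar d(p, q) < \e$. If $p \in X$, the continuity of the action on $X$ supplies $U$ and $\delta_1 > 0$ with $U \cdot B_X(p, \delta_1, d) \subseteq B_X(p, \e/2, d)$, and I set $\delta = \min(\delta_1, \e/2)$. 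A point $q \in B(p, \delta, \bar d)$ is either in $X$ (and the estimate is immediate) or of the form $(t, n, b, (x', y'))$ with $t + d(p, b) < \delta$, the alternative branch of the minimum defining $\bar d$ being at least $1 > \delta$; then $hq = (t, n, hb, (hx', hy'))$ and the triangle inequality gives $\bar d(p, hq) \leq t + d(p, hb) < \delta + \e/2 \leq \e$. The only subtlety I anticipate is keeping $\e < 1$ so as to discard the unwanted branch of the minimum defining $\bar d$; everything else is routine pseudometric bookkeeping.
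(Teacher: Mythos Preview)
Your argument follows the paper's proof almost verbatim: the same stabiliser observation for Part~1, the same interior/base-point split for Part~2, and the same $t + d(\cdot,\cdot)$ triangle-inequality estimate at points of $X$. There is one slip worth noting. In the case $p\in X$ you assert that continuity of the $G$-action on $X$ supplies $U$ and $\delta_1$ with
\[
U\cdot B_X(p,\delta_1,d)\subseteq B_X(p,\e/2,d),
\]
using the \emph{same} pseudometric $d$ on the source side. Continuity at $(e,p)$ only yields $U\cdot W\subseteq B_X(p,\e/2,d)$ for some open neighbourhood $W$ of $p$; since $X$ is merely Tychonoff, $W$ need not contain a $d$-ball (the single pseudometric $d$ need not generate the topology of $X$ near $p$). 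The paper handles this by allowing a \emph{different} continuous pseudometric $\rho$ for the source ball and then running your estimate with $q\in B(p,\delta,\bar\rho)$, which gives $t+\rho(p,b)<\delta$ and hence $b\in W$, so that $d(p,hb)<\e/2$ as required. With this one-line adjustment your proof is complete and coincides with the paper's.
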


\begin{proof}
        Let $\tau$ denote the original topology on $G$ (that is, the topology of simple convergence on $X$ viewed as a discrete space), and $\varsigma$, the topology of simple convergence on $M(X)$ equipped with the discrete topology. Then $\tau\subseteq\varsigma$. For every point of the form $(t,n,a)\in M(x,y)$, with $a\in\{x,y\}$, its stabilizer in $G$ consists exactly of all $g$ that leave both $x$ and $y$ fixed. 
        This implies $\varsigma\subseteq\tau$. 
        
        The continuity of the action of $G$ on $M(X)$ needs to be verified separately at the interior points of the fans and at the points of $X$. Let $x,y\in X$, $(t,n,a)\in M(x,y)$ with $0<t\leq 1$ and $a\in\{x,y\}$. Take an arbitrary neighbourhood of this point, $W$, and set $U=W\cap M(x,y)\setminus\{x,y\}$.
        The set $\St_x\cap\St_y$ is an open neighbourhood of the identity element in $G$ and it sends the open neighbourhood $U$ of $(t,n,a)$ to itself.
        
        Now let $x\in X$. It is enough to look at a neighbourhood of $x$ of the form $B(x,\e,\bar d)$, where $d$ is a continuous pseudometric on $X$ and $\e<1$. Since the action of $G$ on $X$ is continuous, there is a continuous pseudometric $\rho$ and $\delta>0$, as well as a neighbourhood $V$ of the identity in $G$, satisfying $vy\in B(x,\e/2,d)$ once $v\in V$, $y\in X$, and $\rho(x,y)<\delta$. We can further assume that $\delta\leq \e/2$ and $\delta<1$. Now let $z\in B(x,\delta,\bar\rho)$, where the ball is formed in $M(X)$. Then $z$ is of the form $(t,n,y,(a,b))$, where $y\in\{a,b\}$, $t\in [0,1]$, and $\rho(x,y)+t<\delta$. In particular, $t<\delta\leq \e/2$ and $\rho(x,y)<\delta$. Let $v\in V$. Then $vz = (t,n,vy,(va,vb))$, and we conclude:
        \[\bar d(vz,x)\leq t + d(vy,x) <\e.\] 
\end{proof}

\subsection{} 
Suppose a topological group $G$ act continuously on a topological space $X$ in such a way that the topology of $G$ is the topology of pointwise convergence on $X$ viewed as discrete. We have seen that the action of $G$ extends over $M(X)$, and the topology of $G$ is that of simple convergence on $M(X)$ with the discrete topology.
Next we will add to the acting group $G$ all the topological groups $G_{(x,y)}$ acting on each double fan and keeping the rest of the space $M(X)$ fixed. 

Of course if two different topological groups continuously act on the same topological space, it is not in general true that they can be jointly embedded in a larger topological group which still acts continuously. Such examples can be found even if one of the two acting groups is discrete. Therefore, in our case, we need to construct this larger group explicitely. It is going to be a generalized wreath product of $G$ with $G_{(\ast,\star)}$, where $\ast,\star$ are two distinct abstract points.

Consider the product group $G_{(\ast,\star)}^{X^2\setminus\Delta}$ with the product topology. We will canonically identify it with the product group $\prod_{(x,y)\in X^2\setminus\Delta}G_{(x,y)}$. For every $h\in G_{(\ast,\star)}^{X^2\setminus\Delta}$, $h=(h_{(x,y)})_{(x,y)\in X^2\setminus\Delta}$, define a self-bijection of $M(X)$ as follows: if $z\in M(x,y)$, $z=(t,n,a)$, $a\in\{x,y\}$, then  
\[h(z) = \left(h_{(x,y)}(t,n,a) \right)\in M(x,y).\]

In this way, we obtain an action of the group $G_{(\ast,\star)}^{X^2\setminus\Delta}$ on $M(X)$. Our aim is to show that it is a continuous action by homeomorphisms, and the topology of the group is that of pointwise convergence on $M(X)$ viewed as discrete. We start with a technical observation.

\begin{lemma} Let $z\in X$ and $0<\e<1$, and let $d$ be a continuous pseudometric on $X$. Then the image of the ball $B(z,\e/2,\bar d)$ formed in $M(X)$ under every transformation $h\in G_{(\ast,\star)}^{X^2\setminus\Delta}$ is contained in $B(z,\e,\bar d)$. 
        \label{l:sepres}
\end{lemma}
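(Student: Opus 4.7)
The plan is to split according to where a given point of $B(z,\e/2,\bar d)$ lives, and to exploit the fact that $h$ fixes every point of $X$ pointwise (since each component $h_{(x,y)}$ is a homeomorphism of the fan $M(x,y)$ fixing its endpoints). So the only thing to worry about is a point $w$ lying in the interior of some attached fan.

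First I would fix the representation $w = (t,n,a,(x,y))$ with $a\in\{x,y\}$ and $t\in(0,1]$, and assume without loss of generality $a=x$. The formula recalled before Lemma~\ref{l:4.1} gives $\bar d(z,w) = \min\{t+d(x,z),\,2-t+d(y,z)\}$, and the hypothesis $\bar d(z,w) < \e/2 < 1/2$ combined with $2-t\geq 1$ forces the minimum to be realized by the first term. This yields the two separate bounds
\[ t < \tfrac{\e}{2}, \qquad d(x,z) < \tfrac{\e}{2}, \qquad t + d(x,z) < \tfrac{\e}{2}.\]
The same bounds hold with $x$ replaced by $y$ in the symmetric case $a=y$.

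Next I would use the sparse family of marked points to control the $t$-coordinate of the image. Since $t \in (0, 1/2)$, there is a unique $k \geq 1$ with $2^{-k-1} \leq t \leq 2^{-k}$. The transformation $h_{(x,y)} \in G_{(x,y)}$ fixes all marked points of the fan and maps each inter-marked-point interval onto itself in an order-preserving way, so $h(w) = (t',n,x,(x,y))$ with $t'$ in the same closed interval $[2^{-k-1},2^{-k}]$. In particular,
\[ t' \leq 2^{-k} \leq 2 \cdot 2^{-k-1} \leq 2t.\]

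Finally, combining the two estimates:
\[ \bar d(z, h(w)) \leq t' + d(x,z) \leq 2t + d(x,z) = t + \bigl(t + d(x,z)\bigr) < \tfrac{\e}{2} + \tfrac{\e}{2} = \e, \]
which is the desired conclusion. The only real content is the inequality $t' \leq 2t$; the rest is bookkeeping with the definition of $\bar d$ and the remark that $h$ acts trivially on $X$. The mild obstacle is the need to pass through a slightly wasteful doubling (explaining why the statement loses a factor of $2$), but this is handled automatically by the dyadic spacing $2^{-k}$ of the marked points.
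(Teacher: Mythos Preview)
Your argument is correct and follows essentially the same idea as the paper's proof: both exploit that $h$ fixes $X$ pointwise and that the dyadically spaced marked points on each spoke are fixed, so a point at parameter $t$ cannot be pushed past the next marked point above it. The paper phrases this by observing that some marked point $2^{-k}$ lies in the interval $[\e/2-d(z,x),\,\e-d(z,x))$ and hence separates the two balls along each spoke; you phrase it as the local estimate $t'\le 2t$. These are two ways of packaging the same observation, and your bookkeeping with $\bar d$ is accurate (in particular the remark that $2-t\ge 1$ forces the first branch of the minimum).
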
 

\begin{proof}
        Every point $x\in X$ is fixed by our group, as are the points of each convergent sequence $(2^{-k},n,x)$, $k\in\N$, inside of $M(a,b)$, where $x\in\{a,b\}$. The intersections of $V=B(z,\e,\bar d)$ and of $U=B(z,\e/2,\bar d)$ with the interval $[0,1]\times \{(n,x)\}$, provided they are non-empty, are the semi-open intervals $[0, \e-d(z,x)) \times \{(n,x)\}$ and $[0, \e/2-d(z,x)) \times \{(n,x)\}$ respectively, and there is at least one marked point of the form $(2^{-k},n,x)$ between the endpoints of the two intervals. Since the marked point is fixed by every $h\in G_{(\ast,\star)}^{X^2\setminus\Delta}$, the conclusion follows.
\end{proof}

\begin{lemma} The above action of the group $G_{(\ast,\star)}^{X^2\setminus\Delta}$ on $M(X)$ is a continuous action on $M(X)$ by homeomorphisms. In addition, the product topology on this group is the topology of simple convergence on $M(X)$ viewed as discrete.
\end{lemma}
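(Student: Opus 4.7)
The plan is to verify three things: (i) each $h \in G_{(\ast,\star)}^{X^2\setminus\Delta}$ is a self-homeomorphism of $M(X)$; (ii) the action is jointly continuous; (iii) the product topology on the group coincides with that of simple convergence on $M(X)$ viewed as discrete.

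For (i), bijectivity is immediate because each component $h_{(x,y)}$ is a bijection of $M(x,y)$ fixing $x$ and $y$, so the pieces assemble to a self-bijection of $M(X)$. Continuity at an interior point of a fan is local and inherited from the homeomorphism $h_{(x,y)}$. At a point $z \in X$, Lemma \ref{l:sepres} says, uniformly over $G_{(\ast,\star)}^{X^2\setminus\Delta}$, that $h(B(z,\e/2,\bar d)) \subseteq B(z,\e,\bar d)$; since such balls form a base at $z$, continuity of $h$ at $z$ follows. Applying the same to $h^{-1}$ shows $h$ is a homeomorphism.

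For (ii), it is enough to establish continuity at every pair $(e,z)$, because each $h$ being a homeomorphism allows the identity $h_n z_n = h_0\bigl((h_0^{-1}h_n)z_n\bigr)$ to reduce joint continuity at $(h_0,z_0)$ to continuity at $(e,z_0)$. For $z \in X$, Lemma \ref{l:sepres} does it directly: take $W = B(z,\e,\bar d)$, $U = B(z,\e/2,\bar d)$, and $V$ the whole group. For a non-marked interior point $z \in M(x,y)$, pick a neighbourhood $W$ of $z$ contained in $M(x,y)\setminus\{x,y\}$; the continuity of the action of $G_{(x,y)}$ on $M(x,y)$ supplies $V' \ni e$ in $G_{(x,y)}$ and $U \ni z$ with $V'\cdot U \subseteq W$, and lifting to $V = \{h : h_{(x,y)} \in V'\}$ (a basic open cylinder in the product topology) works because every other coordinate $h_{(a,b)}$ acts trivially on $M(x,y)\setminus\{x,y\}$.

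For (iii), I would show that both topologies admit the same base at the identity, namely pointwise stabilizers of finite subsets of $M(X)$. A basic neighbourhood in the product topology is a finite intersection of conditions of the form $h_{(x_i,y_i)} \in V_i$, and each $V_i$ is itself a pointwise stabilizer of finitely many points of $M(x_i,y_i)$; combined, this is a pointwise stabilizer of a finite subset of $M(X)$. Conversely, the stabilizer of a single point $z \in M(X)$ is either the entire group (when $z \in X$ or $z$ is a marked fan point, both fixed by every group element) or a single-coordinate subbasic set in the product (when $z$ is a non-marked interior point, the condition concerns only the $(x,y)$-coordinate), so finite intersections of such stabilizers are open in the product topology. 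The one subtle ingredient throughout is the uniformity in Lemma \ref{l:sepres} across the full group, which removes any need to restrict the group factor in the continuity argument at points of $X$; the rest of the proof parallels the earlier lemma on the action of $G$ on $M(X)$.
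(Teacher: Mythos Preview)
Your proof is correct and follows essentially the same route as the paper: Lemma~\ref{l:sepres} handles both (i) and (ii) at points of $X$, the coordinate-wise action of $G_{(x,y)}$ handles interior fan points via a cylinder lift, and (iii) is the observation that finite-point stabilizers are exactly the basic product neighbourhoods. One small slip: in (ii) you treat only ``non-marked interior points'' of $M(x,y)$, but marked interior points need the same argument (they are fixed by every $h$, yet nearby points are not), and indeed your argument applies to them verbatim, so no real gap.
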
 

\begin{proof} 
        To verify that every $h$ gives a homeomorphism of $M(X)$, it is enough to check that $h$ is continuous at every point. For the interior points of the fans, it is clear from the definition, and for $x\in X$, it follows from Lemma \ref{l:sepres}.
        
        Let us verify the continuity of the action at an arbitrary point $z\in M(X)$. For $z\in X$, this is again a consequence of Lemma \ref{l:sepres}. 
        Now let $z$ be an interior point of some double fan, $M(x,y)$. Let $V$ be a neighbourhood of $z$. There is a neighbourhood $O$ of the identity in the group $G_{(x,y)}$ and an open neighbourhood $U$ of $z$ in $M(x,y)$ with $O\cdot U\subseteq V$. Denote $\tilde O$ the standard basic neighbourhood of the identity in the product group $G_{(\ast,\star)}^{X^2\setminus\Delta}$ which is a cylinder set over $O$, that is, $h\in\tilde O$ if and only if $h_{(x,y)}\in O$. Then $\tilde O\cdot U\subseteq V$.
        
        To verify the last statement of Lemma,
        if $F$ is a finite subset of $M(X)$, then for every
        pair $(x,y)\in X^2\setminus\Delta$ the stabilizer of $F\cap M(x,y)$ in $G_{(x,y)}$ is an open subgroup, and all but finitely many such stabilizers coincide with the entire group $G_{(x,y)}$. It follows that the pointwise stabilizer of $F$ is a standard basic neighbourhood of the identity element in the product. Since the topology of each $G_{(x,y)}$ is that of simple convergence on $M(x,y)$ with the discrete topology, we conclude that the pointwise stabilizers of finite subsets of $M(X)$ form a basic neighbourhood system for the product topology. 
\end{proof}

\subsection{}
The group $G$ acts on $X$, and hence on $X^2\setminus\Delta$, and under our assumptions, the topology of $G$ is that of simple convergence on $X^2\setminus\Delta$ viewed as discrete. 
Further, $G$ acts on 
$G_{(\ast,\star)}^{X^2\setminus\Delta}$ by coordinate permutations: if $g\in G$, $h\in G_{(\ast,\star)}^{X^2\setminus\Delta}$, and $(x,y)\in X^2\setminus\Delta$, then
\[({^g h})_{(x,y)} = h_{(g^{-1}x,g^{-1}y)}.\]
This is an action of $G$ by automorphisms of the topological group $G_{(\ast,\star)}^{X^2\setminus\Delta}$.

[ Recall that if a topological group $G$ acts by automorphisms on a topological group $H$, and the action is continuous, then the semidirect product $G\ltimes H$ is the cartesian product $G\times H$ with the product topology and the group operation
        \[(a,b)(c,d)=(ac,b\cdot{^ad})\]
is a topological group. It contains both $G$ and $H$ as closed topological subgroups in a canonical way, and $H$ is normal, with
\[ghg^{-1} = {^gh}.~~]\]

\begin{lemma} 
Under our assumptions on $G$, the above action is continuous, and so the corresponding semidirect product $G\ltimes G_{(\ast,\star)}^{X^2\setminus\Delta}$ is a topological group.
\end{lemma}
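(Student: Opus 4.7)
The plan is to verify that the action map $\Phi\colon G\times H\to H$, where $H:=G_{(\ast,\star)}^{X^2\setminus\Delta}$, is continuous at an arbitrary point $(g_0,h_0)$; the semidirect product conclusion then follows from the standard construction recalled in the bracketed aside above. By the previous lemma, the product topology on $H$ coincides with the topology of pointwise convergence on $M(X)$ viewed as discrete, so a basic neighbourhood of ${^{g_0}h_0}$ has the form
\[
W_0 = \{h'\in H : h'(z_i) = ({^{g_0}h_0})(z_i),~i=1,\ldots,n\}
\]
for some finite set $\{z_1,\ldots,z_n\}\subseteq M(X)$. Since every element of $H$ fixes $X$ pointwise, I may assume each $z_i$ is an interior point of some double fan, say $z_i=(t_i,n_i,a_i)\in M(x_i,y_i)$ with $a_i\in\{x_i,y_i\}$.

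Next I would exploit the hypothesis that the topology of $G$ is that of pointwise convergence on $X$ as discrete. Continuity of inversion in $G$ then furnishes an open neighbourhood $V_0\ni g_0$ with the property that $g^{-1}x_i = g_0^{-1}x_i$ and $g^{-1}y_i = g_0^{-1}y_i$ for every $g\in V_0$ and every $i$. Writing $(x_i',y_i'):=(g_0^{-1}x_i,g_0^{-1}y_i)$, the definition of the action gives
\[
({^gh})(z_i)=({^gh})_{(x_i,y_i)}(t_i,n_i,a_i)=h_{(x_i',y_i')}(t_i,n_i,a_i),\qquad g\in V_0,~h\in H.
\]
On the $H$ side, I would use the cylindrical open neighbourhood of $h_0$ defined by
\[
W = \bigcap_{i=1}^n \bigl\{h\in H : h_{(x_i',y_i')}\in (h_0)_{(x_i',y_i')}\cdot\St_{(t_i,n_i,a_i)}\bigr\},
\]
which is open because each factor $G_{(x_i',y_i')}\cong G_{(\ast,\star)}$ carries the topology of pointwise convergence on its own fan as discrete, and only finitely many coordinates are constrained.

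For $(g,h)\in V_0\times W$ the two computations combine to yield
\[
({^gh})(z_i) = h_{(x_i',y_i')}(t_i,n_i,a_i) = (h_0)_{(x_i',y_i')}(t_i,n_i,a_i) = ({^{g_0}h_0})(z_i)
\]
for each $i$, so ${^gh}\in W_0$, as required. The only mildly delicate point — and the one I expect to need the most care — is keeping straight which topology controls which piece of data: the topology of $G$ controls the indexing pair $(g^{-1}x,g^{-1}y)$, whereas the pointwise-on-$M(X)$ reformulation of the product topology on $H$ controls the values $h_{(u,v)}(t,n,a)$. Once both topologies are written in their pointwise form these two controls decouple and the verification reduces to the bookkeeping exhibited above.
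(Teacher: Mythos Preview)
Your proof is correct and rests on the same idea as the paper's: the finitely many coordinates singled out by a basic neighbourhood of the target are permuted trivially once $g$ lies in the open stabilizer of those coordinates in $G$. The paper simply carries this out more tersely --- it stays with the product topology on $H$, checks continuity only at $(e,h)$ via the one-line observation that for a basic product neighbourhood $V$ (determined by a finite index set $F\subseteq X^2\setminus\Delta$) the open subgroup $O=\bigcap_{(x,y)\in F}\St_x\cap\St_y$ satisfies $O\cdot V=V$, and leaves the reduction from $(g_0,h_0)$ to $(e,h_0)$ implicit (each ${^{g_0}}$ being a homeomorphism of $H$). Your route through the pointwise-on-$M(X)$ description and a general basepoint $(g_0,h_0)$ is a legitimate unpacking of exactly the same argument; it buys explicitness at the cost of a little extra bookkeeping, but nothing conceptually new is happening.
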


\begin{proof}
        Let $h\in G_{(\ast,\star)}^{X^2\setminus\Delta}$ and let $V$ be a neighbourhood of $h$. It is enough to consider in place of $V$ a basic product neighbourhood: $v\in V$ if and only if $v_x\in V_x$ for all $x$ belonging to a finite set $F\subseteq X$, where each $V_x$ is a neighbourhood of $h_x$ in $G_{(\ast,\star)}$. The open subgroup $O=\cap_{x\in F}\St_x$ of $G$ has the property $O\cdot V=V$.
\end{proof}

\begin{lemma}
        The topological group $\tilde G = G\ltimes G_{(\ast,\star)}^{X^2\setminus\Delta}$ acts continuously on $M(X)$ extending the actions of $G$ and of $G_{(\ast,\star)}^{X^2\setminus\Delta}$. The topology of $\tilde G$ is that of pointwise convergence on $M(X)$ viewed as discrete.
\end{lemma}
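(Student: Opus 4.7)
The plan is as follows. Define the action by $(g,h)\cdot z = g(h(z))$, then check in order (i) that this is an action extending the two given ones, (ii) that it is jointly continuous, and (iii) that the semidirect product topology on $\tilde G$ coincides with the topology of pointwise convergence on $M(X)$ viewed as discrete.

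For (i), one needs to confirm that conjugation of the $H$-action by $g\in G$ reproduces the coordinate permutation ${}^g h$ defined earlier: for $z\in M(x,y)$ the extension formula $g(t,n,a,(x,y))=(t,n,ga,(gx,gy))$ shows that $g\circ h\circ g^{-1}$ acts on $M(x,y)$ via $h_{(g^{-1}x,g^{-1}y)}$, which is exactly $({}^g h)_{(x,y)}$. The extension property is then immediate from $(g,e)\cdot z = g(z)$ and $(e,h)\cdot z = h(z)$. For (ii), the action is the composition
\[
\tilde G\times M(X)=G\times H\times M(X)\longrightarrow G\times M(X)\longrightarrow M(X),\quad(g,h,z)\mapsto(g,h(z))\mapsto g(h(z)),
\]
in which the first arrow is $\mathrm{id}_G$ times the continuous $H$-action and the second is the continuous $G$-action, both established in the preceding two lemmas.

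For (iii), I would compare neighbourhood bases at the identity. In the product topology these are the sets $\St_G(F_1)\times\St_H(F_2)$ with $F_1\subseteq X$ and $F_2\subseteq M(X)$ finite; in the pointwise topology on $\tilde G$ they are the sets $\St_{\tilde G}(F)$ with $F\subseteq M(X)$ finite. Two observations from the preceding text make the comparison work: every $h\in H$ fixes $X$ pointwise (each $x\in X$ is a limit of marked points $(2^{-k},n,x)$ inside $M(x,y)$, and these are fixed by $h_{(x,y)}\in G_{(x,y)}$ by definition), and every $g\in G$ fixing both $x$ and $y$ acts as the identity on the fan $M(x,y)$, since the formula $g(t,n,a,(x,y))=(t,n,ga,(gx,gy))$ then reduces to the identity.

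With these in hand I would establish the two containments. For a finite $F\subseteq M(X)$ set $F_1=(F\cap X)\cup\{x,y:z\in F\setminus X \text{ with } z\in M(x,y)\}$ and $F_2=F\setminus X$; then any $(g,h)\in\St_G(F_1)\times\St_H(F_2)$ satisfies $g(h(z))=z$ on $F$, giving one inclusion. Conversely, starting from finite $F_1\subseteq X$ and $F_2\subseteq M(X)$, take $F=F_1\cup F_2\cup\{x,y:z\in F_2\setminus X \text{ with } z\in M(x,y)\}$; for $(g,h)\in\St_{\tilde G}(F)$, evaluating $g(h(z))=z$ at $z\in F_1\subseteq X$ (where $h$ is automatically trivial) forces $g\in\St_G(F_1)$, and evaluating at $z\in F_2\setminus X$, where $g$ is then already trivial on the ambient fan, forces $h(z)=z$, whence $h\in\St_H(F_2)$. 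I expect the main subtlety to be precisely this second fact: in a semidirect product action the two factors can in principle partially cancel one another, and only the rigidity of $G$ on every fan whose base points it fixes lets one decouple them and read off the pointwise convergence of $h$ from that of $g(h(\cdot))$.
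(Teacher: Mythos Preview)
Your action formula $(g,h)\cdot z = g(h(z))$ is incompatible with the semidirect product law $(a,b)(c,d)=(ac,\,b\cdot{}^{a}d)$ and the conjugation identity ${}^{g}h=g\circ h\circ g^{-1}$ that you correctly verify. With your formula,
\[
[(a,b)(c,d)]\cdot z=(ac)\bigl((b\cdot{}^{a}d)(z)\bigr)=a\circ c\circ b\circ a\circ d\circ a^{-1}(z),
\]
while
\[
(a,b)\cdot\bigl((c,d)\cdot z\bigr)=a\circ b\circ c\circ d(z),
\]
and these differ as soon as $G$ and $H$ do not commute as transformations. The paper instead factors $(g,h)=(e,h)(g,e)$ and takes $(g,h)\cdot z=h(g(z))$, which does satisfy associativity; your own conjugation check is exactly the ingredient that makes \emph{that} formula work.

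Once the formula is swapped, the remainder of your argument is correct and parallels the paper. Your continuity proof via the composition $G\times H\times M(X)\to G\times M(X)\to M(X)$ (with the arrows reordered accordingly) is the same idea the paper invokes in one sentence. Your treatment of (iii) is in fact more explicit than the paper's: the paper simply asserts that the $\tilde G$-stabilizer of a finite $F\subseteq M(X)$ equals a Cartesian product $\St_G(\Phi)\times U$ for a suitable finite $\Phi\subseteq X$ and basic open $U$ in $H$, whereas you spell out both inclusions and identify the crucial decoupling mechanism (that $h$ fixes $X$ pointwise, and $g$ fixes $M(x,y)$ pointwise once it fixes $x$ and $y$). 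Note that your argument for the reverse inclusion also survives the swap: from $h(g(z))=z$ with $z\in F\cap X$ one gets $g(z)\in X$ and hence $h(g(z))=g(z)=z$, and then for $z\in F_2\setminus X$ with base points already fixed by $g$ one has $g(z)=z$, so $h(z)=z$.
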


\begin{proof}
        To verify that the actions of $G$ and of $G_{(\ast,\star)}^{X^2\setminus\Delta}$ combine together to give an action of the semidirect product, it is enough to verify that for each $g\in G$ and $h\in G_{(\ast,\star)}^{X^2\setminus\Delta}$,
        the action by the element ${^g h}$ on $M(X)$ equals the composition of  actions of three elements, $g\circ h\circ g^{-1}$. For every pair $(x,y)$ and each point $(t,n,a,(x,y))$, $a\in\{x,y\}$, $t\in [0,1]$, one has
        \begin{eqnarray*}
                g\circ h\circ g^{-1}(t,n,a,(x,y)) &=& g\circ h (t,n,g^{-1}(a),(g^{-1}x,g^{-1}y)) \\
                &=& g\left(h_{(g^{-1}x,g^{-1}y)}(t,n, g^{-1}(a),(g^{-1}x,g^{-1}y))\right) \\
                &=& h_{(g^{-1}x,g^{-1}y)}(t,n,a,(x,y))) \\
                &=& {^g h}(t,n,a,(x,y)).
                \end{eqnarray*}
        Every element $(g,h)$ of $\tilde G$ can be uniquely written as the product $(e,h)(g,e)$, where $g\in G$, $h\in G_{(\ast,\star)}^{X^2\setminus\Delta}$, and the rule 
        \[(g,h)(x) = h(g(x))
        \]
        consistently defines an action. Indeed, 
        \begin{eqnarray*}
                (a,b)(c,d)(x)&=& badc(x) \\
                &=& bada^{-1}ac(x) \\
                &=& (e,b)(e,{^ad})(ac,e)(x) \\
                &=& (e,b{^ad})(ac,e)(x) \\
                &=& (ac,b\,{^ad})(x).
                \end{eqnarray*}
        
        Since the actions of both $G$ and $G_{(\ast,\star)}^{X^2\setminus\Delta}$ are continuous, it follows immediately that so is the action of the semidirect product. 
        
        The topologies of both groups are those of simple convergence on $M(X)$ viewed as discrete. 
        Let $F$ be a finite subset of $M(X)$. Denote $\Phi$ the union of all sets $\{x,y\}$ having some $z\in F$ belonging to $M(x,y)$. Then the stabilizer of $F$ in $\tilde G$ is the cartesian product of the stabilizer of $\Phi$ in $G$ and a standard open neighbourhood of the identity in the product group, $U= \prod_{x\in X} U_x$, where each $U_x$ is a neighbourhood of the identity in the group $G_{(a_x,b_x)}$, stabilizing a finite set, and $U_x$ equals the entire group whenever $x\notin \Phi$. These observations implies the last statement of Lemma. 
\end{proof}

\section{Iterating the procedure}

\subsection{}
Let $X$ be a $G$-space such that the topology of $G$ is that of simple convergence on $X$ as discrete. 
Repeat iteratively the extension ($M(X)$, $\tilde G$) countably infinitely many times. Denote the corresponding $n$-th iterations by $M^n(X)$ and $\tilde G^{(n)}$.

Each space $M^n(X)$ is contained inside $M^{n+1}(X)$, so we can form the union $M^{\infty}(X)$. Let us equip it with a topology. If $d$ is a continuous pseudometric on $X$ with the property $d\leq 2$, the graph metric extension $\bar d$ over $M(X)$ satisfies $\bar d\leq 4$. The pseudometric $\tilde d = \min\{\bar d, 2\}$ is continuous and induces the same topology as $\bar d$. Getting back to our construction, denote the $n$-th iteration of $d$ corresponding to the extension $\tilde d$ by $\tilde d^{(n)}$. This is a pseudometric on $M^{\infty}(X)$, and we equip the space with the topology generated by all such pseudometrics. Notice that the restriction of $\tilde d^{(\infty)}$ to each $M^{(n)}(X)$ equals $\tilde d^{(n)}$.

The action of each group $\tilde G^{(n)}$ on $M^n(X)$ lifts recursively to an action of $\tilde G^{(n)}$ on $M^{\infty}(X)$. Notice that each subspace $M^{(m)}(X)$ is invariant under the action of every group $\tilde G^{(n)}$ for all $m,n$. 

\subsection{}
From now on, we will be only working with a particular case of interest to us. Namely, we will start with the two-point space $X=\{\ast,\star\}$ such that $d(\ast,\star)=2$, equipped with an action of a trivial group $G=\{e\}$.
The resulting space $M^{\infty}(X)$ is a countable metric space with the metric $\tilde d^{\infty}$, a graph with rational edges, infinitely branching at every point, and equipped with the graph distance. Because of Lemma \ref{l:D}, the topology induced by the metric $\tilde d^{\infty}$ on every subspace $M^{\infty}(X)$ is the same as the topology we have defined elsewhere.

\begin{lemma}
        Let $n\in\N$ and $g\in \tilde G^{(n)}$. The recursive extension of $g$ over $M^{\infty}(X)$ is a homeomorphism.
\end{lemma}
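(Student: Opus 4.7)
The extension is built by iterating Lemma~\ref{l:extensionofg}: for each $k\geq n$, the extension from $M^k(X)$ to $M^{k+1}(X)$ sends $(t,j,c,(a,b))\mapsto(t,j,gc,(ga,gb))$, so $g$ preserves the in-fan coordinates $(t,j)$ in every double fan attached at a level strictly greater than $n$. Iterating Lemma~\ref{l:extensionofg} also shows that $g$ is a homeomorphism of each $M^k(X)$ with $k\geq n$. Bijectivity of the extension on $M^\infty(X)=\bigcup_k M^k(X)$ is then immediate, and since $g^{-1}\in\tilde G^{(n)}$ as well, it suffices to establish continuity of $g$.

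Fix $z\in M^\infty(X)$ and $\varepsilon\in(0,1)$. Pick $m\geq n$ with $z\in M^m(X)$. By continuity of $g$ at $z$ in $M^m(X)$, choose $\delta_0>0$ such that $\tilde d^{(m)}(z,c)<\delta_0$ implies $\tilde d^{(m)}(gz,gc)<\varepsilon/2$ for every $c\in M^m(X)$, and set $\delta=\min(\delta_0,\varepsilon/2,1/2)$.

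Now let $w\in M^\infty(X)$ satisfy $\tilde d^{(\infty)}(z,w)<\delta$, and let $k$ be minimal with $w\in M^k(X)$. If $k\leq m$, the choice of $\delta_0$ concludes the argument. Otherwise $w=(t_1,j_1,c_1,(a_1,b_1))$ with $c_1\in\{a_1,b_1\}\subset M^{k-1}(X)$ and $t_1\in(0,1]_{\mathbb Q}$. Any path from $z\in M^{k-1}(X)$ to $w$ must enter the new fan $M(a_1,b_1)$ through $a_1$ or $b_1$: entering through $c_1$ contributes $t_1$ to the length, whereas entering through the opposite base contributes $2-t_1\geq 1>\delta$ via the identified fan endpoint. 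Hence the shortest path enters through $c_1$, and
\[\tilde d^{(k)}(z,w)=\tilde d^{(k-1)}(z,c_1)+t_1<\delta.\]
Iterating this descent level by level produces a finite tower $c_1,\ldots,c_r$ with $c_r\in M^m(X)$ and in-fan parameters $t_1,\ldots,t_r$ satisfying $\sum_{i=1}^r t_i+\tilde d^{(m)}(z,c_r)<\delta$.

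Because $g$ preserves in-fan coordinates at every level above $n$, and $m\geq n$, the image $gw$ unfolds into the same tower $(t_i,j_i)_{i=1}^r$ over the base point $gc_r\in M^m(X)$. Therefore
\[\tilde d^{(k)}(gz,gw)\leq\sum_{i=1}^r t_i+\tilde d^{(m)}(gz,gc_r)<\delta+\varepsilon/2<\varepsilon.\]
Since $\tilde d^{(\infty)}$ restricts to $\tilde d^{(k)}$ on $M^k(X)$, this gives the required continuity of $g$ at $z$; applying the same argument to $g^{-1}\in\tilde G^{(n)}$ finishes the proof. The main obstacle is the level-by-level peeling: one must exploit the restriction $\delta<1$ to force every shortest path to enter each successively descended fan through its near base, so that the fan parameters $t_i$ accumulate additively rather than being canceled by alternate paths through the endpoint identifications; without this one would get a modulus of continuity growing with the level $k$, which would not suffice for continuity on the direct limit.
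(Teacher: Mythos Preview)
Your proof is correct and follows essentially the same strategy as the paper's: decompose the shortest path from a nearby point $w$ to $z$ into a portion inside the ``base level'' containing $z$ (where continuity of $g$ is already known) plus a portion through higher-level fans (where $g$ preserves the in-fan parameters $(t,j)$ and hence acts isometrically). The paper organizes this as a two-case split according to whether $z\in M^n(X)$ or not, while you unify the cases by fixing any $m\geq n$ with $z\in M^m(X)$; the level-by-level peeling you spell out is exactly the content of the paper's one-line claim that there exist $a,b\in M^n(X)$ and $w\in\{a,b\}$ with $\tilde d^{(\infty)}(x,z)=\tilde d^{(\infty)}(x,w)+\tilde d^{(n)}(w,z)$.
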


\begin{proof}
        We have noted before that the extension of $g^{-1}$ from $X$ over $M(X)$ is the inverse of the extension of $g$ for every topological space $X$, therefore the same holds for the extension over $M^{\infty}(X)$. It is enough to verify that $g$ is continuous at an arbitrary point $z\in M^{\infty}(X)$. 
        
        Suppose first $z\notin M^n(X)$. Then $\e=\tilde d^{(\infty)}(z, M^n(X))>0$, and the restriction of $g$ to the open $\e$-ball around $z$ in $M^{\infty}(X)$ is an isometry, as seen from the recursive way in which $g$ is being extended.
        
        Now suppose $z\in M^n(X)$. Given a neighbourhood $V$ of $gz$, find an $\e$ with $0<\e<1$ and $B(gz,\e,\tilde d^{(\infty)})\subseteq V$, where the open ball is formed in the metric space $M^{\infty}(X)$. There is $\delta$ satisfying $0<\delta<\e/2$ and $gB(z,\delta,\tilde d^{(n)})\subseteq B(gz,\e/2,\tilde d^{(n)})$ (the balls in $M^n(X)$). If $x\in B(z,\delta,\tilde d^{(\infty)})$, then there are $a,b\in M^n(X)$ and $w\in\{a,b\}$ with $\tilde d^{(\infty)}(x,z) = \tilde d^{(\infty)}(x,w) + \tilde d^{(n)}(w,z)$, and we have
        \[\tilde d^{(\infty)}(gx,gz) \leq \tilde d^{(\infty)}(gx,gw) + \tilde d^{(n)}(gw,gz) < \tilde d^{(\infty)}(x,w) +\e/2 < \e.\] 
\end{proof}

Thus, the group $\tilde G^{(\infty)}=\cup_{n\in\N} \tilde G^{(n)}$ acts on the countable metric space $M^{\infty}(X)$ by homeomorphisms. We equip it with the topology of simple convergence on $M^{\infty}(X)$ viewed as discrete. This is a separable, metric group topology, inducing the usual topology on every subgroup $\tilde G^{(n)}$.

\begin{lemma}
        The action of the topological group $\tilde G^{(\infty)}$ on the space $M^{\infty}(X)$ is continuous.
\end{lemma}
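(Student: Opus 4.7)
The plan is to verify continuity of the action at an arbitrary point $(g_0,z_0)\in\tilde G^{(\infty)}\times M^\infty(X)$, by lifting the continuity of the finite-level actions of $\tilde G^{(n)}$ on $M^n(X)$ (established at each step of the previous construction) to the level $\infty$ via the same $w\in M^n(X)$ decomposition used in the preceding lemma. The new ingredient is a geometric factor-of-two bound on how an arbitrary $h\in\tilde G^{(\infty)}$ distorts traversals of fan ribs above level $n$; this is needed because $h$ now varies within a neighbourhood of $g_0$ and may have non-trivial semidirect-product components at any higher level.

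First I pick $n$ large enough that $g_0\in\tilde G^{(n)}$ and $z_0\in M^n(X)$, and reduce any neighbourhood of $g_0z_0$ to a basic ball $V=B(g_0z_0,\e,\tilde d^{(\infty)})$ with $\e\in(0,1)$. Applying the continuity of the action of $\tilde G^{(n)}$ on $M^n(X)$ at $(g_0,z_0)$, I obtain a basic neighbourhood $W_n=\{h\in\tilde G^{(n)}:h|_F=g_0|_F\}$ of $g_0$ (for some finite $F\subseteq M^n(X)$) together with $\delta\in(0,\e/4)$ such that $W_n\cdot B(z_0,\delta,\tilde d^{(n)})\subseteq B(g_0z_0,\e/2,\tilde d^{(n)})$. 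The same $F$ defines an open $W=\{h\in\tilde G^{(\infty)}:h|_F=g_0|_F\}$ around $g_0$ in $\tilde G^{(\infty)}$; because at every iteration of the construction the new semidirect-product factor fixes the previous space pointwise, each $h\in\tilde G^{(\infty)}$ preserves $M^n(X)$ setwise with $h|_{M^n(X)}\in\tilde G^{(n)}$, and for $h\in W$ this restriction lies in $W_n$. I then take $U=B(z_0,\delta,\tilde d^{(\infty)})$.

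For $h\in W$ and $x\in U$, exactly as in the preceding lemma one has $w\in M^n(X)$ with $\tilde d^{(\infty)}(x,z_0)=\tilde d^{(\infty)}(x,w)+\tilde d^{(n)}(w,z_0)$, and the choice of $W_n,\delta$ yields $\tilde d^{(n)}(hw,g_0z_0)<\e/2$. The heart of the argument is the inequality $\tilde d^{(\infty)}(hx,hw)\leq 2\tilde d^{(\infty)}(x,w)$: the shortest path from $x$ to $w$ that stays above $M^n(X)$ decomposes as a concatenation of traversals, one per fan along the way, each running from an interior point at some position $t\in[2^{-k-1},2^{-k}]$ down to the corresponding fan endpoint through the infinite sequence of marked points $2^{-k-1},2^{-k-2},\ldots$, of total length equal to $t$; since $h$ fixes every marked point and maps each sub-interval $(2^{-k-1},2^{-k})$ onto another such sub-interval by an order-preserving bijection (via its fan-group component at the pertinent level, or by a canonical extension preserving positions at higher levels), the image position $t'$ lies in $[2^{-k-1},2^{-k}]$ as well, so $t'\leq 2t$. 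Summing over all the traversals yields the factor of two, and the triangle inequality then gives $\tilde d^{(\infty)}(hx,g_0z_0)<\e$, i.e.\ $hx\in V$.

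The main obstacle is to justify cleanly this factor-of-two distortion estimate on an arbitrary rib traversal, which requires careful bookkeeping of how the various semidirect-product components of $h$ at different levels interact with the nested marked-point structure of every fan in $M^\infty(X)$; the remainder of the argument then follows the template of the preceding lemma.
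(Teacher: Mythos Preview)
Your proof is correct and follows essentially the same approach as the paper: reduce to the continuity of the level-$n$ action on $M^{(n)}(X)$, decompose a shortest path from $x$ to a nearest $w\in M^{(n)}(X)$ into rib segments, and use the marked-point structure to bound the distortion of each segment by a factor of two. The only cosmetic differences are that the paper checks continuity at the identity (legitimate, since the preceding lemma gives that each fixed element acts by a homeomorphism) whereas you work at an arbitrary $(g_0,z_0)$, and your phrase ``$h$ fixes every marked point'' should more precisely read ``$h$ preserves the $t$-coordinate of every marked point''---but your parenthetical already makes the intended meaning clear.
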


\begin{proof} We will verify the continuity of the action at an arbitrary point $z\in M^{\infty}(X)$. Let $0<\e<1$. Denote $n=\min\{m\colon z\in M^{(m)}(X)\}$. 
        Find a finite set $F\subseteq M^{(n)}(X)$ and a $\delta$ with $0<\delta<\e/4$, so that one has $gB(z,\delta,\tilde d^{(n)})\subseteq B(z,\e/2, \tilde d^{(n)})$ as soon as $g\in \tilde G^{(n)}$ and $g$ stabilizes each point of $F$. 
        
        We claim that $g^\prime B(z,\delta,\tilde d^{(\infty)})\subseteq B(z,\e/2, \tilde d^{(\infty)})$ as soon as $g^\prime\in \tilde G^{(\infty)}$ and $g^\prime$ stabilizes each point of $F$. Indeed, such a $g^{\prime}$ can be written $g^{\prime}=gh$, where $g\in \tilde G^{(n)}$, $g$ stabilizes each point of $F$, and $h$ stabilizes each point of $M^{(n)}(X)$. Let $x\in B(z,\delta,\tilde d^{(\infty)})$. Then for some $m\geq n$, 
        \[\tilde d^{(\infty)}(x,z) = \sum_{i=n}^m \abs{x_i-x_{i+1}} + \tilde d^{(n)}(x_n,z),
        \]
        where $x_i\in M^{(i)}(X)$, and the notation is slightly abused to stress the fact that the distance between each two consecutive points is taken within a suitable rational interval. In particular, $\sum_{i=n}^m \abs{x_i-x_{i+1}}<\delta$. For every $i$, there is a marked point along the edge starting at $x_i$ and going towards $x_{i+1}$ and beyond, at some distance $2^{-k_i}$ from $x_i$ so that $\abs{x_i-x_{i+1}}\leq 2^{-k_i}< 2\abs{x_i-x_{i+1}}$. This implies that the distance between $x_i$ and $x_{i+1}$ cannot be increased by more than twice under any transformation, and we have
        \begin{eqnarray*}
                \tilde d^{(\infty)}(g^{\prime}x,g^{\prime}z) + \tilde d^{(n)}(gx_n,gz)&\leq &
                \sum_{i=n}^m \abs{g^{\prime}x_i-g^{\prime}x_{i+1}}\leq \sum_{i=n}^m 2^{-k_i} + d^{(n)}(gx_n,gz) \\ &<& 2\delta + \frac{\e}2 < \e.
        \end{eqnarray*}
\end{proof}

\subsection{}
Consider the universal equivariant compactification of $M^{\infty}(X)$ under the action of $\tilde G^{(\infty)}$. For any $x,y\in M^{\infty}(X)$, there is $n$ with $x,y\in M^{(n)}(X)$. Therefore, $x$ and $y$ are joined by an equivariant Megrelishvili double fan $M(x,y)$ inside $M^{(n+1)}(X)$. The topological group $G_{x,y}$ is a topological subgroup of $\tilde G^{(n+1)}$, therefore of $\tilde G^{(\infty)}$, with the standard action on $M(x,y)$. Under the mapping $i$, one has $i(x)=i(y)$. We conclude: the entire universal compactification of $M^{\infty}(X)$ is a singleton.

\subsection{}
Finally, notice that, according to a well-known result of Sierpinski \cite{sierpinski}, a countable metrizable space without isolated points is homeomorphic to the space of rational numbers, $\Q$, with the usual topology. Thus, topologically, $M^{\infty}(X)$ is just $\Q$.

\section{Concluding remarks}

\subsection{} The referee suggested that the recent preprint \cite{GJ} constructing universal compact metrizable $\R$-spaces can be used to give an explicit proof of de Vries' theorem \cite{dV-1,dV} in the particular case $G=\R$. 

\subsection{}
Both $X$ and $G$ in our example are separable metrizable.
The author does not know if the example can be modified so as to have both $X$ and $G$ Polish (separable completely metrizable). 

\subsection{}
The example constructed here is of course artificial. However, if the story of extreme amenability is any indication (see e.g. \cite{P06}), the phenomenon may one day be naturally found. One needs also compare the phenomenon discovered and explored by Glasner, Tsirelson and Weiss \cite{GTW} and Glasner and Weiss \cite{Gl-W3} whereby a weakly continuous action of a Polish group on a standard probability measure space cannot be realized spatially, that is, as a set-theoretic action on a compact space equipped with an invariant measure. Here, the phenomenon occurs very naturally: every L\'evy group \cite{GrM} of measure-preserving transformations behaves in such a way. 

This reinforces a feeling that there might exist numerous {\em natural} examples of topological transformation groups whose universal equivariant compactification is a singleton.

\subsection{}
For instance, can such an example be realized as the group of automorphisms of a suitable (discrete or continuous) ultrahomogeneous Fra\"\i ss\'e structure? 

The {\em Gromov compactification} of a bounded metric space $X$ corresponds to the $C^\ast$-algebra generated by all distance functions $x\mapsto d(x,-)$; under the action of the isometry group  with the pointwise topology, such functions are always $\pi$-uniform, and so the Gromov compactification is equivariant, and it is a homeomorphic embedding (see e.g. \cite{megrelishvili07}, sect. 2). This means that the topology on the conjectural Fra\'\i ss\'e structure needs to be modified, but again, in some ``natural'' way (given by a suitable partial order for instance, as it is esentially the case in our example).

\subsection{} An elegant ``natural'' example where the universal equivariant compactification has been calculated explicitely is that of the unit sphere $\s^{\infty}$ in the Hilbert space $\ell^2$ under the action of the unitary group $U(\ell^2)$ with the strong operator topology. This compactification is the unit ball ${\mathbb{B}}^{\infty}$ with the weak topology. This is a result by Stojanov \cite{stoyanov}.

This motivated Megrelishvili to ask whether the same conclusion holds for the unit sphere in every separable reflexive Banach space $E$ under the action of the  group of isometries of $E$ with the strong operator topology, see Question 2.5 (2005) in \cite{megrelishvili07}. However, it appears to us the question needs to be adjusted.

According to Jarosz \cite{jarosz}, every Banach space can be renormed in such a way that the group of isometries consists of constant multiples of the identity. For such a renormed space, the Stone-\v Cech compactification of the projective space is an equivariant compactification of the sphere, because the action of the group of isometries is trivial. Since for $E$ infinite dimensional the Stone-\v Cech compactification is non-metrizable, the answer to the question as stated is in the negative. 

However, it makes sense to reformulate the question for separable reflexive Banach spaces whose group of isometries has a dense orbit in the unit sphere. For instance, it appears the answer is already unknown for $L^p(0,1)$, $1<p<\infty$, $p\neq 2$. 

\subsection{} Even for non-reflexive spaces the question makes sense. 
In particular, the group of isometries of the Gurarij space has attacted plenty of interest recently \cite{BY}. Is it true that the universal equivariant compactification of the unit sphere in the Gurarij space under the action of the group of isometries with the pointwise topology is the Gromov compactification of the sphere? 

The same question, for the Holmes space \cite{holmes} (see also \cite{P06}, pp. 112--113).

Notice in this connection that the closed ball of $\ell^2$ in the weak topology is the Gromov compactification of the unit sphere.

\subsection{}
The analogous question, for the Urysohn sphere $\s_{\mathbb U}$ \cite{NVTS}. More precisely: is the universal equivariant compactification of the Urysohn sphere under the action of the group of isometries with the pointwise topology equal to the Gromov compactification of $\s_{\mathbb U}$? Probably the answer is in the positive and should not be very difficult to obtain, as the group of isometries of the Urysohn sphere is by now pretty well understood. 

\subsection{} In conclusion, here is a question suggested by Furstenberg and Scarr (see \cite{megrelishvili07}, question 2.6 (2006)): does there exist a topological transformation group $(G,X)$ whose universal equivariant compactification is a singleton, yet the action of $G$ on $X$ is transitive? 

Such an example cannot be Polish because of Effros' Microtransivitity Theorem combined with the observation that for a closed subgroup $H$ of a topological group $G$, the equivariant compactification $\alpha_G(G/H)$ is always a homeomorphic embedding of $G/H$ \cite{dV-2}.

As noted by Jan van Mill (\cite{vM}, question 3.5) the Furstenberg--Scarr question remains open already for groups acting on the space of rational numbers, $\Q$. Of course in our example the action on $\Q$ is highly non-transitive in view of all those marked points. 

Can there be a suitable group of homeomorphisms of the Hilbert space $\ell^2$ whose equivariant compactification is trivial?

\section{Acknowledgements}
       
I am very indebted to Michael Megrelishvili from whom over the years I have learned many things, including Smirnov's question. Dana Barto\v sov\'a and Micheal Pawliuk were considering my invitation to join as collaborators at the early stages of this project, and the ensuing discussions with them helped to have some of my initial ideas discarded. I am grateful to Konstantin Kozlov for providing some references, and for the anonymous referee for a number of comments.

Much of this work has been done while the author was a 2012--2015 Special Visiting Researcher of the program Science Without Borders of CAPES (Brazil), processo 085/2012.


\begin{thebibliography}{100}
\bibitem{AS} S.A. Antonjan, Ju. M. Smirnov, {\em
         Universal objects and bicompact extensions for topological groups of transformations,}
        Dokl. Akad. Nauk SSSR \textbf{257} (1981), no. 3, 521--526 (Russian).

\bibitem{BY} I. Ben Yaacov, {\em 
The linear isometry group of the Gurarij space is universal,}
Proc. Amer. Math. Soc. \textbf{142} (2014), 2459--2467. 

\bibitem{bourbaki} N. Bourbaki, {\em Topologie G\'en\'erale,} Chapitre IX (Ancien chapitre VI). {|em Espaces uniformisables. Espaces normaux. Espaces métrisables} (\'Etat 1) (38 p.), available at {\tt http://sites.mathdoc.fr/archives-bourbaki/PDF/027$\_$iecnr$\_$033.pdf}

\bibitem{brook} Robert B. Brook, \textit{A construction of the greatest 
ambit,} Mathematical Systems Theory \textbf{4} (1970), 243--248.
  
\bibitem{GTW} E. Glasner, B. Tsirelson, and B. Weiss,
\textit{The automorphism group of the Gaussian measure cannot act pointwise,}
Israel J. Math. \textbf{148} (2005), 305--329.

\bibitem{Gl-W3}
E. Glasner and B. Weiss, \textit{Spatial and non-spatial actions of Polish groups,} Ergodic Theory Dynam. Systems \textbf{25} (2005), 1521--1538.

\bibitem{GJ} Yonatan Gutman, Lei Jin, {\em An explicit compact universal space for real flows,} 12 pp., arXiv:1612.08193 [math.DS]

\bibitem{GrM} M. Gromov and V.D. Milman, A topological application of the isoperimetric inequality, Amer. J. Math. 105 (1983), 843-854.
  %
  \bibitem{holmes}
  M.R. Holmes, {\em The universal separable metric space of Urysohn and isometric embeddings thereof in Banach spaces,} Fund. Math. \textbf{140} (1992), 199--223.
  
  \bibitem{jarosz}
  K. Jarosz, {\em Any Banach space has an equivalent norm with trivial isometries,} Israel J. Math. \textbf{64} (1988), 49--56.
  
  \bibitem{kozlov} K.L. Kozlov, {\em Topology of actions and homogeneous spaces,} Sbornik Math. \textbf{204} (2013), 588--620.

\bibitem{megrelishvili}
M.G. Megrelishvili, \textit{A Tikhonov $G$-space that does not have compact $G$-extension and $G$-linearization,}  Russian Math. Surveys \textbf{43} (1988), 177--178.

\bibitem{megrelishvili89}  M. Megrelishvili, 
\textit{Compactification and factorization in the category of G-spaces,}
in: Categorical Topology (J. Adamek and S. MacLane, eds.), 
World Sci. Publ., Singapore, 1989, pp. 220--237.

\bibitem{megrelishvili07}
M. Megrelishvili, {\em Topological transformation groups: selected topics,}
in: Open Problems in Topology II
(Elliott Pearl, editor), Elsevier Science, 2007, pp. 423--437.

\bibitem{MegS} M. Megrelishvili and T. Scarr, 
\textit{Constructing Tychonoff $G$-spaces which are not $G$-Tychonoff,}
Topology Appl. \textbf{86} (1998), 69--81.

\bibitem{vM} Jan van Mill, {\em On the $G$-compactifications of the rational numbers,} Monatsh Math. \textbf{157} (2009), 257--266.

\bibitem{NVTS} L. Nguyen Van Th\'e and N. Sauer, {\em 
The Urysohn sphere is oscillation stable,}
Geom. Funct. Anal. \textbf{19} (2009), 536--557. 

\bibitem{palais}
R.S. Palais, {\em The classification of $G$-spaces,} Mem. Amer. Math. Soc. \textbf{36} (1960), iv+72 pp. 

\bibitem{P06}
V. Pestov, \textit{Dynamics of Infinite-dimensional Groups: the Ramsey--Dvoretzky--Milman phenomenon,} 
University Lecture Series {\bf 40}, American Mathematical Society, Providence, RI,  2006.

\bibitem{sierpinski}
W. Sierpi\'nski, {\em Sur une propri\'et\'e topologique des ensembles d\'enombrables denses en soi,} Fund. Math. \textbf{1} (1920), 11--16.

\bibitem{sokolovskaya} A. Sokolovskaya, {\em $G$-compactifications of pseudocompact $G$-spaces,} Topology Appl. \textbf{155} (2008), 342--346. 

\bibitem{stoyanov} L. Stojanov, {\em Total minimality of the unitary groups,}
 Math. Z. \textbf{187} (1984), 273--283. 

\bibitem{dV-2}
J. de Vries, {\em Can every Tychonoff $G$-space equivariantly be embedded in a compact Hausdorff $G$-space?}, Math. Centrum \textbf{36}, Amsterdam, Afd. Zuiver Wisk., 1975.

\bibitem{dV-1} J. de Vries, {\em Equivariant embeddings of $G$-spaces,} in: J. Novak (ed.), General Topology and
its Relations to Modern Analysis and Algebra IV, Part B, Prague, 1977, 485--493.

\bibitem{dV} J. de Vries, {\em On the existence of $G$-compactitications,} Bull. Acad. Polon. Sci. Ser. Math. \textbf{26} (1978), 275--280.


\end{thebibliography}
\end{document}